\newtheorem{theorem}{Theorem}[section]
\newtheorem{prop}[theorem]{Proposition}
\newtheorem{lemma}[theorem]{Lemma}
\newtheorem{definition}[theorem]{Definition}
\newtheorem{remark}[theorem]{Remark}
\begin{document}
\arraycolsep=1pt

\title{Weak Factorizations of the Hardy space $H^1(\mathbb{R}^n)$ in terms of Multilinear Riesz Transforms}
%\thanks{ BDW's research supported in part by National Science Foundation DMS \# 0955432}
%\thanks{{\it {\rm 2010} Mathematics Subject Classification:} Primary: 42B35, 42B25.}
%\thanks{{\it Key words:}
% Hardy space, BMO space, multilinear Riesz transform, weak factorization.}

\author{Ji Li and Brett D. Wick}

\date{}
\maketitle

\begin{abstract}
This paper provides a constructive proof of the weak factorizations of the classical Hardy space $H^1(\mathbb{R}^n)$ in terms of multilinear Riesz transforms. As a direct application, we obtain a new proof of the characterization of ${\rm BMO}(\mathbb{R}^n)$ (the dual of $H^1(\mathbb{R}^n)$) via commutators of the multilinear Riesz transforms.
\end{abstract}

\bigskip
\bigskip

{ {\it Keywords}: Hardy space, BMO space, multilinear Riesz transform, weak factorization.}

\medskip

{{Mathematics Subject Classification 2010:} {42B35, 42B20, 42B35}}

 %\tableofcontents

\bigskip

\section{Introduction and Statement of Main Results}
\setcounter{equation}{0}

The real-variable Hardy space theory on $n$-dimensional Euclidean space $\mathbb{R}^n$ ($n\geq1$) plays an important role in harmonic analysis and has been systematically developed.  An important result about the Hardy space is the weak factorization obtained by Coifman, Rochberg and Weiss \cite{CRW}.  This factorization proves that all $H^1(\mathbb{R}^n)$ can be written in terms of bilinear forms associated to the Riesz transforms, with the basic building blocks being:
$$
\Pi_j(f,g)=f R_j g+gR_jf,
$$
with $R_j$ the $j$th Riesz transform $\displaystyle R_jf(x)=\int_{\mathbb{R}^n} f(y) \frac{x_j-y_j}{\left\vert x-y\right\vert^{n+1}}\,dy$.  This result follows as a corollary of the characterization of the function space ${\rm BMO}(\mathbb{R}^n)$ in terms of the boundedness of the commutators $[b,R_j](f)=bR_j f-R_j(bf)$.

The main goals of this paper are to provide a constructive proof of the weak factorizations of the classical Hardy space $H^1(\mathbb{R}^n)$ in terms of multilinear Riesz transforms. As a direct corollary, we obtain a full characterization of ${\rm BMO}(\mathbb{R}^n)$ (the dual of $H^1(\mathbb{R}^n)$) via commutators of the multilinear Riesz transforms.  Our strategy and approach will be to modify the direct constructive proof of Uchiyama in \cite{U} for the weak factorization of the Hardy spaces.

We now recall the multilinear Calder\'on--Zygmund operators (see for example the statements in \cite{GT}).  Let $K(y_0,y_1,\ldots,y_m)$ be a locally integrable function defined away from the diagonal
$\{ y_0=y_1=\cdots=y_m \}$. $K$ is said to be an $m$-linear Calder\'on--Zygmund kernel if there exist positive constants $A$ and $\epsilon$ such that 
\begin{align}\label{size}
|K(y_0,y_1,\ldots,y_m)| \leq {A\over \big( \sum_{k,l=0} |y_k-y_l| \big)^{mn}}
\end{align}
and
\begin{align}\label{regularity}
|K(y_0,y_1,\ldots,y_j, \ldots,y_m) - K(y_0,y_1,\ldots,y_j', \ldots,y_m)|\leq {A |y_j-y'_j|^\epsilon\over \big( \sum_{k,l=0} |y_k-y_l| \big)^{mn+\epsilon}}
\end{align}
for all $0\leq j\leq m$ and $|y_j-y'_j|\leq {1\over2} \max_{0\leq k\leq m} |y_j-y_k|$.

Suppose $T$ is an $m$-linear operator defined on $L^{p_1}(\mathbb{R}^n)\times \cdots \times L^{p_m}(\mathbb{R}^n)$ associated with the $m$-linear Calder\'on--Zygmund kernel $K$, i.e.,
\begin{align}
T(f_1,\ldots,f_m)(x) := \int_{\mathbb{R}^{mn}} K(x,y_1,\ldots,y_m) \prod_{j=1}^m f_j(y_j)\,dy_1\cdots dy_m,
\end{align}
for all $x\not\in \cap_{j=1}^m {\rm supp}(f_j)$, where $f_1,\ldots,f_m$ are $m$ functions on $\mathbb{R}^n$ with $ \cap_{j=1}^m {\rm supp}(f_j)\not=\emptyset$.
If 
$$T: L^{p_1}(\mathbb{R}^n)\times \cdots \times L^{p_m}(\mathbb{R}^n) \to L^p(\mathbb{R}^n)$$
for some $1<p_1,\ldots,p_m$ and $p$ with $p^{-1} = \sum_{j=1}^m p_j^{-1}$, then we say $T$ is an $m$-linear Calder\'on--Zygmund operator.
According to \cite[Theorem 3]{GT}, $T$ can be extended to a bounded operator from $L^{p_1}(\mathbb{R}^n)\times \cdots \times L^{p_m}(\mathbb{R}^n) $ to $L^p(\mathbb{R}^n)$ for all for $1<p_1,\ldots,p_m$ and $p$ with $p^{-1} = \sum_{j=1}^m p_j^{-1}$.

We also define that $T$ is $mn$-homogeneous %in the $l$th component, $l=1,\ldots,m$, 
if $T$ satisfies
$$
|T(\chi_{B_0},\ldots,\chi_{B_m})(x)| \geq {C\over M^{mn}}
$$  
for $m+1$ balls $B_0=B_0(x_0,r),\ldots,B_m=B_m(x_m,r)$ satisfying
 $|y_0-y_l|\approx Mr$ for $l=1,2,\ldots,m$ and for all $x\in B_0$, where $r>0$ and $M>10$ a positive number.

Another stronger version of $mn$-homogeneous is as follows.
$$
K(x_0,\ldots,x_m) \geq {C\over M^{mn}}
$$ 
or
$$
K(x_0,\ldots,x_m) \leq -{C\over M^{mn}}
$$ 
 for $m+1$ pairwisely disjoint balls $B_0=B_0(x_0,r),\ldots,B_m=B_m(x_m,r)$ satisfying
 $|y_0-y_l|\approx Mr$ and for all $x_l\in B_l$ for $l=1,2,\ldots,m$,  where $r>0$ and $M>10$ is a positive number.  It is easy to see that this stronger version implies the version above.

In analogy with the linear case, we define the $l$th possible multilinear commutators of the $m$th multilinear Calder\'on--Zygmund operator $T$ as follows.

\begin{definition}
Suppose $T$ is an $m$-linear Calder\'on--Zygmund operator as defined above. For $l=1,2,\ldots,m$, we set
\begin{align}
[b,T]_l(f_1,\ldots,f_m)(x) := T(f_1,\ldots, bf_l,\ldots,f_m)(x) - bT(f_1,\ldots,f_m)(x).
\end{align}
\end{definition}
This is simply measuring the commutation properties in each linear coordinate separately.

Dual to the multilinear commutator, in both language and via a formal computation, we define the multilinear ``multiplication'' operators $\Pi_{l}$:

\begin{definition}\label{def of pi}
Suppose $T$ is an $m$-linear Calder\'on--Zygmund operator as defined above.  For $l=1,2,\ldots,m$,
\begin{align}
\Pi_{l}(g,h_1,\ldots,h_m)(x) := h_l T_{l}^*(h_1,\ldots, h_{l-1},g ,h_{l+1},\ldots,h_m)(x) - gT(h_1,\ldots,h_m)(x),
\end{align}
where $T_{l}^*$ is the $l$th partial adjoint of $T$, defined as
\begin{align}
 T_{l}^*(h_1,\ldots,h_m)(x):=\int_{\mathbb{R}^{mn}} K(y_l,y_1,\ldots,y_{l-1},x,y_{l+1},\ldots,y_m) \prod_{j=1}^{m}h_j(y_j) \,dy_1\cdots dy_m.
\end{align}
\end{definition}

Our main result is then the following factorization result for $H^1(\mathbb{R}^n)$ in terms of the multilinear operators $\Pi_{l}$.  Again, this is in direct analogy with the rest in the linear case obtained by Coifman, Rochberg, and Weiss in \cite{CRW}.

\begin{theorem}
\label{weakfactorization}
Suppose $1\leq l\leq m$, and $1<p_1,\ldots,p_m<\infty$ and $1\leq p<\infty$ with
$$ {1\over p_1} + \cdots + {1\over p_m} = {1\over p}.  $$
And suppose that $T$ is an $m$-linear Calder\'on--Zygmund operator, which is $mn$-homogeneous in the $l$th component. Then for every $f\in H^1(\mathbb{R}^n)$, there exists sequences $\{\lambda_s^k\}\in \ell^1$ and functions $g_s^{k}\in L^{p'}(\mathbb{R}^n)$, $h_{s,1}^{k}\in L^{p_1}(\mathbb{R}^n)$,\ldots, $h_{s,m}^{k}\in L^{p_m}(\mathbb{R}^n)$ such that
\begin{align}\label{factorization}
f=\sum_{k=1}^\infty\sum_{s=1}^{\infty} \lambda_{s}^{k}\,\Pi_{l}(g_s^{k}, h_{s,1}^{k}, \ldots, h_{s,m}^{k})
\end{align}
in the sense of $H^1(\mathbb{R}^n)$.
Moreover, we have that:
$$
\left\Vert f\right\Vert_{H^1(\mathbb{R}^n)} \approx \inf\left\{\sum_{k=1}^\infty \sum_{s=1}^{\infty} \left\vert \lambda_s^k\right\vert \left\Vert g_s^k\right\Vert_{L^{p'}(\mathbb{R}^n)}\left\Vert h_{s,1}^k\right\Vert_{L^{p_1}(\mathbb{R}^n)}\cdots \left\Vert h_{s,m}^k\right\Vert_{L^{p_m}(\mathbb{R}^n)} \right\},
$$
where the infimum above is taken over all possible representations of $f$ that satisfy \eqref{factorization}.
\end{theorem}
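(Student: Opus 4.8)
The plan is to follow the two-sided strategy standard for weak-factorization theorems, adapting Uchiyama's constructive argument. The ``easy'' direction is the upper bound
$\|\Pi_l(g,h_1,\dots,h_m)\|_{H^1}\lesssim \|g\|_{L^{p'}}\prod_j\|h_j\|_{L^{p_j}}$, together with its consequence that any $f$ admitting a representation \eqref{factorization} satisfies $\|f\|_{H^1}\lesssim\inf\{\cdots\}$. This should come from duality against $\mathrm{BMO}$: for $b\in\mathrm{BMO}$ one checks the pairing identity $\langle b,\Pi_l(g,h_1,\dots,h_m)\rangle = \langle [b,T]_l(h_1,\dots,h_m),\, \text{(appropriate slot)}\rangle$ up to the usual reindexing, and then invokes the known boundedness of the multilinear commutator $[b,T]_l\colon L^{p_1}\times\cdots\times L^{p_m}\to L^p$ with norm $\lesssim\|b\|_{\mathrm{BMO}}$ (the multilinear analogue of the Coifman--Rochberg--Weiss commutator bound, which I will cite or prove via the $L^p$-boundedness of $T$ and a standard good-$\lambda$/Fefferman--Stein argument). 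Since $H^1$ is the predual of $\mathrm{BMO}$, taking the supremum over $\|b\|_{\mathrm{BMO}}\le 1$ gives $\Pi_l(g,h_1,\dots,h_m)\in H^1$ with the claimed norm control, and the triangle inequality plus $\ell^1$-summability of $\{\lambda_s^k\}$ finishes this direction.

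The substantive direction is: every $f\in H^1$ can actually be built from the $\Pi_l$'s. Here the plan is to reduce to $H^1$-atoms, and then to approximate a single atom $a$ supported on a ball $B=B(x_0,r)$ by one building block $\Pi_l(g,h_1,\dots,h_m)$ with a controlled error of small $H^1$-norm. Concretely: place the auxiliary balls $B_1,\dots,B_m$ (and if $l\ne$ the natural index, also reposition which component carries $g$) far from $B$ at mutual distance $\approx Mr$ for a large parameter $M$; set $h_j=\chi_{B_j}$ for $j\ne l$, let $h_l$ be essentially $\chi_{B_l}$, and choose $g$ supported on $B$ proportional to $a$ divided by the (nearly constant, by the kernel smoothness \eqref{regularity}) value of $T_l^*(\chi_{B_1},\dots,g\text{-slot},\dots,\chi_{B_m})$ on $B_l$, normalized so that $\Pi_l(g,h_1,\dots,h_m)$ reproduces $a$ on $B$ plus a tail. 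The $mn$-homogeneity hypothesis guarantees the relevant value of $T$ (resp.\ $T_l^*$) is bounded below by $c/M^{mn}$, so $g$ and the $h_j$ have $L$-norms whose product is $\lesssim \|a\|_{L^p}\cdot(\text{powers of }r)\cdot M^{mn}$, compatible with $\|a\|_{H^1}\approx 1$ after the correct bookkeeping of the exponents $p,p',p_j$ and the measures $|B_j|=|B|$. The error term $E:=a-c_B\,\Pi_l(g,h_1,\dots,h_m)$ has the piece of $\Pi_l$ living on $B$ matching $a$ up to $O(M^{-\epsilon})$ by \eqref{regularity}, while the piece living near the $B_j$'s is controlled by the size estimate \eqref{size}, which decays like $M^{-mn}$; after multiplying by the normalizing constant this gives $\|E\|_{H^1}\le \tfrac12\|a\|_{H^1}$ for $M$ large enough (and each summand of $E$ is itself a fixed multiple of an $H^1$-atom, so one stays within the atomic framework).

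Given that approximation lemma, the factorization of a general $f\in H^1$ follows by the usual iteration: write $f=\sum_s \mu_s a_s$ atomically with $\sum|\mu_s|\lesssim\|f\|_{H^1}$, approximate each $a_s$ to get $f=\sum_s\mu_s\big(c_{B_s}\Pi_l(\cdots)+E_s\big)$ where $\|\sum_s\mu_s E_s\|_{H^1}\le\tfrac12\|f\|_{H^1}$, collect the $\Pi_l$ terms into the first ``layer'' $k=1$, then repeat the construction on the remaining error $f^{(1)}:=\sum_s\mu_s E_s$ to produce layer $k=2$, and so on; the geometric decay $\|f^{(k)}\|_{H^1}\le 2^{-k}\|f\|_{H^1}$ makes the double sum in \eqref{factorization} converge in $H^1$ and yields $\inf\{\cdots\}\lesssim\|f\|_{H^1}$. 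The main obstacle I anticipate is the approximation lemma itself — specifically, making the geometry and the normalization work simultaneously: one must choose $g$ and the $h_j$'s so that $\Pi_l$ reproduces the atom on $B$ while the $L^{p'}\times L^{p_1}\times\cdots\times L^{p_m}$ norm product stays $\lesssim 1$, and this forces a careful balancing of $M$, $r$, and the Hölder exponents, using the lower bound from $mn$-homogeneity in an essential way and the smoothness \eqref{regularity} to treat $T_l^*(\cdots)$ as a constant on $B_l$. A secondary technical point is handling the $l$-dependence cleanly (the partial adjoint $T_l^*$ and which slot $g$ occupies) so that the same argument covers all $1\le l\le m$ uniformly.
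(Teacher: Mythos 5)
Your overall architecture mirrors the paper's exactly: the upper bound via duality against $\mathrm{BMO}$ and the multilinear commutator estimate (the paper's Proposition~2.1), an approximate-factorization lemma for a single $H^1$-atom (the paper's Theorem~2.3), and a geometric iteration that writes $f = \sum_k M_k$ with $\|E_K\|_{H^1}\le(\varepsilon C)^K\|f\|_{H^1}\to 0$. The one place you genuinely diverge is inside the approximation lemma. The paper places the atom in the $l$th function: it sets $h_l = a / T_l^*(\chi_{B_1},\ldots,\chi_{B(y_l,r)},\ldots,\chi_{B_m})(x_0)$ and $g = \chi_{B(y_l,r)}$, so that $h_l\,T_l^*(\ldots)$ is the main piece reproducing $a$ on $B$, while $-g\,T(h_1,\ldots,h_m)$ is the far-field tail controlled by the kernel's regularity in the $l$th slot together with the cancellation of $a$. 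You do the mirror image, putting $a$ in $g$ and letting every $h_j$ be the indicator of a far-away ball, so that $-g\,T(h_1,\ldots,h_m)$ becomes the main piece and $h_l\,T_l^*(h_1,\ldots,g,\ldots,h_m)$ the tail. This dual variant is workable, but two details need straightening out. First, the normalizer you wrote (``the value of $T_l^*(\chi_{B_1},\ldots,g\text{-slot},\ldots,\chi_{B_m})$ on $B_l$'') is not the right constant for your construction: to have $-g\,T(h_1,\ldots,h_m)\approx a$ on $B$ you must divide $a$ by $T(\chi_{B_1},\ldots,\chi_{B_m})(x_0)$, which is a different integral from an evaluation of $T_l^*$ on $B_l$, even though the two have comparable magnitude $\approx M^{-mn}$. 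Second, the lower bound you then need is on $T(\chi_{B_1},\ldots,\chi_{B_m})(x_0)$ with $x_0$ in the atom's ball, i.e.\ $mn$-homogeneity of $T$ itself, whereas the hypothesis and the paper's proof use $mn$-homogeneity of $T_l^*$ (that is what ``in the $l$th component'' refers to, cf.\ the paper's inequality~\eqref{claim degenerate}); both hold for Riesz-type kernels, but you should make explicit which one you invoke, and note that the cancellation driving the tail estimate now comes from $g\propto a$ rather than from $h_l$. Once these points are fixed, your computation of the $M^{-\epsilon}$ error, the $H^1$-bound $\lesssim \log M/M^\epsilon$ via the two-bump lemma (the paper's Lemma~2.2), and the iteration all proceed exactly as you describe.
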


We then obtain the following new characterization of ${\rm BMO}(\mathbb{R}^n)$ in terms of the commutators with the multilinear Riesz transforms; again in analogy with the main results in \cite{CRW}.

\begin{theorem}
\label{c:bmo}
Let $1\leq l\leq m$.  Suppose that $T$ is an $m$-linear Calder\'on--Zygmund operator.
%\label{c:bmo}
If $b$ is in $\rm BMO(\mathbb{R}^n)$, then  the commutator
$[b,T]_l(f_1,\ldots,f_m)(x)$ is a bounded map from $L^{p_1}(\mathbb{R}^n)\times \cdots\times L^{p_m}(\mathbb{R}^n)$ to $L^{p}(\mathbb{R}^n)$ for all $1<p_1,\ldots,p_m<\infty$ and $1\leq p<\infty$, with
$$ {1\over p_1} + \cdots + {1\over p_m} = {1\over p}  $$
and with the operator norm
$$
\|[b,T]_l : L^{p_1}(\mathbb{R}^n)\times \cdots\times L^{p_m}(\mathbb{R}^n)\to L^{p}(\mathbb{R}^n)\| \leq C\|b\|_{\rm BMO(\mathbb{R}^n)}.
$$

Conversely, for $b\in \cup_{q>1} L^q_{loc}(\mathbb{R}^n)$, if $T$ is $mn$-homogeneous in the $l$th component, and  $[b,T]_l$ is bounded from $L^{p_1}(\mathbb{R}^n)\times \cdots\times L^{p_m}(\mathbb{R}^n)$ to $L^{p}(\mathbb{R}^n)$ for some $1<p_1,\ldots,p_m<\infty$ and $1\leq p<\infty$, with
$$ {1\over p_1} + \cdots + {1\over p_m} = {1\over p},  $$
then $b$ is in $\rm BMO(\mathbb{R}^n)$ and $ \|b\|_{\rm BMO(\mathbb{R}^n)} \leq C \| [b,T]_l: L^{p_1}(\mathbb{R}^n)\times \cdots\times L^{p_m}(\mathbb{R}^n)\to L^{p}(\mathbb{R}^n)\|$.
\end{theorem}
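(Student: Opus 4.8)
\noindent\emph{Strategy.} The two implications are of quite different natures. The forward one---$b\in{\rm BMO}(\mathbb{R}^n)$ implies $[b,T]_l$ bounded---does not use the $mn$-homogeneity and is the multilinear version of the classical estimate of Coifman--Rochberg--Weiss; it can be quoted from the existing theory of multilinear commutators or obtained by the exponential trick below. The converse implication is where the constructive factorization of Theorem~\ref{weakfactorization} is used, via $H^1$--${\rm BMO}$ duality.

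\noindent\emph{Forward implication.} Assume first that $b$ is real-valued, fix bounded compactly supported $f_1,\dots,f_m$, and put
\[
\Phi(z):=e^{zb}\,T\bigl(f_1,\dots,f_{l-1},\,e^{-zb}f_l,\,f_{l+1},\dots,f_m\bigr),\qquad z\in\mathbb{C}.
\]
Differentiating gives $\Phi'(0)=-[b,T]_l(f_1,\dots,f_m)$, so by Cauchy's formula $[b,T]_l(f_1,\dots,f_m)=-\frac{1}{2\pi i}\int_{|z|=\varepsilon}z^{-2}\Phi(z)\,dz$ for every $\varepsilon>0$, and therefore
\[
\|[b,T]_l(f_1,\dots,f_m)\|_{L^p}\le\frac{1}{2\pi\varepsilon}\int_0^{2\pi}\bigl\|\Phi(\varepsilon e^{i\theta})\bigr\|_{L^p}\,d\theta .
\]
With $s=\mathrm{Re}\,z$ one has $\|\Phi(z)\|_{L^p}=\|T(f_1,\dots,e^{-zb}f_l,\dots,f_m)\|_{L^p(e^{psb}dx)}$; choosing the weight $w_l=e^{p_lsb}$ in the $l$th slot and $w_j\equiv1$ for $j\neq l$ produces exactly that target measure, and by the John--Nirenberg inequality $\vec w=(w_1,\dots,w_m)$ belongs to the multilinear $A_{\vec P}$ class with constants uniform in $|s|\le\varepsilon$ as soon as $\varepsilon\lesssim\|b\|_{{\rm BMO}}^{-1}$. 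The multilinear weighted bounds for $T$ then give $\|\Phi(z)\|_{L^p}\lesssim\prod_j\|f_j\|_{L^{p_j}}$ uniformly on $|z|=\varepsilon$, and taking $\varepsilon\approx\|b\|_{{\rm BMO}}^{-1}$ yields $\|[b,T]_l(f_1,\dots,f_m)\|_{L^p}\lesssim\|b\|_{{\rm BMO}}\prod_j\|f_j\|_{L^{p_j}}$; a general $b$ is split into real and imaginary parts. (Equivalently, this direction follows directly from the unweighted bounds for multilinear commutators already available in the literature.)

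\noindent\emph{Converse implication.} The algebraic core is the duality between $\Pi_l$ and $[b,T]_l$: for bounded, compactly supported $g,h_1,\dots,h_m$, Fubini's theorem together with the definition of the partial adjoint $T_l^*$ in Definition~\ref{def of pi} gives
\[
\int_{\mathbb{R}^n}b(x)\,\Pi_l(g,h_1,\dots,h_m)(x)\,dx=\int_{\mathbb{R}^n}[b,T]_l(h_1,\dots,h_m)(x)\,g(x)\,dx ;
\]
moreover $\Pi_l(g,h_1,\dots,h_m)$ is supported in $\mathrm{supp}\,g\cup\mathrm{supp}\,h_l$ and lies in $L^1(\mathbb{R}^n)$, since $p^{-1}=\sum_jp_j^{-1}$ and both $T$ and $T_l^*$ are bounded $m$-linear Calder\'on--Zygmund operators. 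Now let $f$ range over a dense subspace of $H^1(\mathbb{R}^n)$ of nice functions (finite linear combinations of $L^\infty$-atoms, so that $\langle b,f\rangle$ is meaningful---here $b\in L^q_{{\rm loc}}$ is used). Applying Theorem~\ref{weakfactorization}, write $f=\sum_{k,s}\lambda^k_s\,\Pi_l(g^k_s,h^k_{s,1},\dots,h^k_{s,m})$ in $H^1(\mathbb{R}^n)$ with $\sum_{k,s}|\lambda^k_s|\,\|g^k_s\|_{L^{p'}}\prod_j\|h^k_{s,j}\|_{L^{p_j}}\lesssim\|f\|_{H^1}$. Using the displayed identity term by term, H\"older's inequality, and the assumed boundedness of $[b,T]_l$,
\[
|\langle b,f\rangle|\le\sum_{k,s}|\lambda^k_s|\,\bigl\|[b,T]_l(h^k_{s,1},\dots,h^k_{s,m})\bigr\|_{L^p}\|g^k_s\|_{L^{p'}}\le\|[b,T]_l\|\sum_{k,s}|\lambda^k_s|\,\|g^k_s\|_{L^{p'}}\prod_j\|h^k_{s,j}\|_{L^{p_j}} ,
\]
and taking the infimum over all representations in \eqref{factorization} gives $|\langle b,f\rangle|\lesssim\|[b,T]_l\|\,\|f\|_{H^1}$ on a dense subspace. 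By $H^1$--${\rm BMO}$ duality, $b\in{\rm BMO}(\mathbb{R}^n)$ and $\|b\|_{{\rm BMO}}\lesssim\|[b,T]_l\|$.

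\noindent\emph{Main obstacle.} All of the real work is already inside Theorem~\ref{weakfactorization}; granting it, Theorem~\ref{c:bmo} is essentially a duality formality, and the only delicate point in the converse is that $b$ is merely locally in $L^q$, so $\langle b,\cdot\rangle$ need not be continuous on $H^1(\mathbb{R}^n)$ a priori and one must check that it commutes with the infinite factorization series. This is dealt with in the usual way: since $\|b\|_{{\rm BMO}}\approx\sup\{|\langle b,a\rangle|:a\text{ an }L^\infty\text{-atom}\}$ it suffices to work with a single atom $a$; iterating the one-step version of the factorization one writes $a=\sum_{N\le M}\eta_N+R_M$ with each $\eta_N$ of the form $\lambda_N\Pi_l(\cdot)$, $\|R_M\|_{H^1}\le2^{-M}$ and $R_M$ again a bounded function supported in a fixed compact set; then $\langle b,R_M\rangle\to0$ (an $H^1$-small, uniformly bounded, compactly supported function has small $L^{q'}$-norm), while $\sum_N\langle b,\eta_N\rangle$ converges absolutely by the norm control in Theorem~\ref{weakfactorization}, so $\langle b,a\rangle=\sum_N\langle b,\eta_N\rangle$ and the estimate above applies.
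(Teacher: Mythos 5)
Your converse implication follows the paper's argument almost verbatim: the key duality identity $\langle b,\Pi_l(g,h_1,\dots,h_m)\rangle=\langle[b,T]_l(h_1,\dots,h_m),g\rangle$, Theorem~\ref{weakfactorization}, H\"older, and $H^1$--${\rm BMO}$ duality. Where you differ is in the forward implication: the paper simply cites \cite[Theorem 3.18]{LOPTT} for the boundedness of the multilinear commutator, whereas you reconstruct it from first principles via the Coifman--Rochberg--Weiss conjugation trick, $\Phi(z)=e^{zb}T(f_1,\dots,e^{-zb}f_l,\dots,f_m)$, Cauchy's integral formula, and multilinear $A_{\vec P}$-weighted bounds with the weight $w_l=e^{p_l s b}$, $w_j\equiv1$ otherwise. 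This is a valid alternative; the computation $\Phi'(0)=-[b,T]_l(f_1,\dots,f_m)$ is right, the choice of weights cancels exactly as you claim, and John--Nirenberg gives uniform $A_{\vec P}$ control for $|{\rm Re}\,z|\lesssim\|b\|_{{\rm BMO}}^{-1}$. The payoff of your route is that it makes explicit where the ${\rm BMO}$ hypothesis enters and what is really needed from the weighted theory, at the cost of quoting the multilinear weighted $T$-bound from LOPTT anyway (just a different theorem from the same source). A further small difference is that you flag and resolve the technical issue of interpreting $\langle b,f\rangle$ when $b$ is only in $L^q_{\rm loc}$ and of interchanging the pairing with the infinite factorization series; the paper's proof takes $f\in H^1\cap L^\infty_c$ and treats this implicitly, so your added discussion is a genuine tightening rather than a departure. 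Both directions are correct.
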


As a specific example of such operator $T$ which is an $m$-linear Calder\'on--Zygmund operator and is $mn$-homogeneous, we now recall the multilinear Riesz transforms, see \cite[Page 162]{GT} for example.  
\begin{definition}
Suppose $f_1,\ldots,f_m$ are $m$ functions on $\mathbb{R}^n$.  For $j=1,2,\ldots,m$,
\begin{align}
\vec{R}_j(f_1,\ldots,f_m)(x) := \int_{\mathbb{R}^{mn}} \vec{K}_j(x,y_1,\ldots,y_m) \prod_{s=1}^m f_s(y_s)\,dy_1\cdots dy_m,
\end{align}
where the kernel $\vec{K}_j(x,y_1,\ldots,y_m) $ is defined as 
\begin{align}
 \vec{K}_j(x,y_1,\ldots,y_m):=  { x-y_j \over |( x-y_1, \ldots, x-y_m )|^{mn+1}}.
\end{align}
\end{definition}
To be more specific, 
$$ \vec{R}_j =(R_{j}^{(1)},\ldots, R_{j}^{(n)}), $$
where for each $i=1,2,\ldots,n$, $R_{j}^{(i)}$ is the multilinear operator with the kernel
\begin{align*}
 K_j^{(i)}(x,y_1,\ldots,y_m):=  { x^{i}-y_j^{i} \over |( x-y_1, \ldots, x-y_m )|^{mn+1}}.
\end{align*}
Here $x=(x^1,\ldots,x^m)$ and $y_j =(y_j^{1} ,\ldots,y_j^{m} )$.
According to \cite[Corollary 2]{GT},  $\vec{R}_j$ is an $m$-linear Calder\'on--Zygmund operator for $j=1,2,\ldots,m$.
Moreover, we have that
\begin{align*}
|\vec{R}_j(\chi_{B_0},\ldots,\chi_{B_m})(x)| =
\left| \int_{B_1}\cdots\int_{B_m} { x-y_j \over |( x-y_1, \ldots, x-y_m )|^{mn+1}} dy_1\cdots dy_m \right| \geq {C\over M^{mn}}
\end{align*} 
for $m+1$ pairwisely disjoint balls $B_0=B_0(x_0,r),\ldots,B_m=B_m(x_m,r)$ satisfying
 $|y_0-y_l|\approx Mr$ for $l=1,2,\ldots,m$, $x\in B_0$, $r>0$, and $M>10$ a positive number.

Thus, $\vec{R}_j$ is $mn$-homogeneous.

\begin{remark}
As in Corollary 2 in \cite[Page 162]{GT}, they listed a specific multilinear Calder\'on--Zygmund kernel of the form
$$ K(x_1,\ldots,x_m) ={ \Omega\Big( { (x_1,\ldots,x_m) \over |(x_1,\ldots,x_m)| } \Big) \over |(x_1,\ldots,x_m)|^{mn}}, $$
where $\Omega$ is an integrable function with mean value zero on the sphere $\mathbb{S}^{mn-1}$ which is Lipschitz
of order $\epsilon>0$. We point out that it is possible to choose kernels of this type that satisfy the $mn$-homogeneous condition as we stated above. The Riesz transforms $\vec{R}_j$ are special examples of this form. 
\end{remark}

\begin{remark}
We remark that Theorem  \ref{c:bmo} was obtained by Chaffee in \cite{Chaffee}.  His proof uses a technique applied by Janson \cite{J}, which is different than that used here.  One advantage of the approach taken in this paper is that it provides for a constructive algorithm to produce the weak factorization of $H^1(\mathbb{R}^n)$.  As mentioned in \cite{Chaffee} it would be interesting to show the equivalence between ${\rm BMO}(\mathbb{R}^n)$ and the commutators when $p<1$.  Both the methods used there and in this paper hinge upon duality, which won't be a viable strategy when $p<1$.
\end{remark}

\section{Weak Factorization of the Hardy space $H^1(\mathbb{R}^n)$}
\setcounter{equation}{0}
\label{s:factorization}

In this section we turn to proving Theorem \ref{weakfactorization}.  We collect some facts that will be useful in proving the main result.

We first provide the following estimate of the multilinear operator $\Pi_{l}$, which is defined in Definition \ref{def of pi}.
\begin{prop}\label{t-H1 estimate of pi}
Suppose $1\leq l \leq m$. Let $1<p_1,\ldots,p_m<\infty$ and $1\leq p<\infty$ with
$$
{1\over p} ={1\over p_1} +\cdots+ {1\over p_m}.
$$
There exists a positive constant $C$ such that for any $g\in L^{p'}(\mathbb{R}^n)$ and $h_i\in L^{p_i}(\mathbb{R}^n)$, $i=1,\ldots,m$,
$$\|\Pi_{l}(g,h_1,\ldots,h_m)\|_{H^1(\mathbb{R}^n)}\leq C\|g\|_{L^{p'}(\mathbb{R}^n)}\|h_1\|_{L^{p_1}(\mathbb{R}^n)}\cdots \|h_m\|_{L^{p_m}(\mathbb{R}^n)}.$$
\end{prop}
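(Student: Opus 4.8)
The plan is to use the duality between $H^1(\mathbb{R}^n)$ and $\mathrm{VMO}(\mathbb{R}^n)$: if $F\in L^1(\mathbb{R}^n)$ satisfies $\big|\int_{\mathbb{R}^n}F(x)\,b(x)\,dx\big|\le A\,\|b\|_{{\rm BMO}(\mathbb{R}^n)}$ for every $b\in C_c^\infty(\mathbb{R}^n)$, then $F\in H^1(\mathbb{R}^n)$ with $\|F\|_{H^1(\mathbb{R}^n)}\lesssim A$. So for $F=\Pi_l(g,h_1,\ldots,h_m)$ I would verify two things: that $F\in L^1(\mathbb{R}^n)$, and that $F$ pairs against $C_c^\infty$ functions with the right bound. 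For the first point, note that $T_l^*$ is again an $m$-linear Calder\'on--Zygmund operator --- its kernel is obtained from $K$ by permuting the variables $x=y_0,y_1,\ldots,y_m$, and \eqref{size}--\eqref{regularity} are symmetric in those variables --- so by \cite[Theorem 3]{GT} it is bounded on every admissible product of Lebesgue spaces. Taking exponent $p'$ in the $l$-th slot and $p_j$ in slot $j$ for $j\neq l$ gives output exponent $q$ with $q^{-1}=1-p_l^{-1}$, i.e.
$$
\big\|T_l^*(h_1,\ldots,h_{l-1},g,h_{l+1},\ldots,h_m)\big\|_{L^{p_l'}(\mathbb{R}^n)}\lesssim\|g\|_{L^{p'}(\mathbb{R}^n)}\prod_{j\neq l}\|h_j\|_{L^{p_j}(\mathbb{R}^n)};
$$
multiplying by $h_l\in L^{p_l}(\mathbb{R}^n)$ and using H\"older's inequality puts the first term of $\Pi_l$ in $L^1$ with norm $\lesssim\|g\|_{L^{p'}}\prod_j\|h_j\|_{L^{p_j}}$, while the second term $g\,T(h_1,\ldots,h_m)\in L^{p'}\cdot L^{p}\subset L^1$ is estimated the same way using boundedness of $T$.

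The heart of the matter is the pairing identity. Fix $b\in C_c^\infty(\mathbb{R}^n)$; then $b\,h_l\in L^{p_l}(\mathbb{R}^n)$. Using the defining property of the $l$-th partial adjoint, namely $\langle T(f_1,\ldots,f_m),f_0\rangle=\langle T_l^*(f_1,\ldots,f_{l-1},f_0,f_{l+1},\ldots,f_m),f_l\rangle$ (valid for Schwartz inputs and extended to the relevant $L^{p_i}$ setting by density, justified through the $L^q$-boundedness of $T$ and $T_l^*$), with $f_j=h_j$ for $j\neq l$, $f_l=b\,h_l$, and $f_0=g$, I obtain
$$
\int_{\mathbb{R}^n} b\,h_l\,T_l^*(h_1,\ldots,g,\ldots,h_m)\,dx=\int_{\mathbb{R}^n} g\,T(h_1,\ldots,b\,h_l,\ldots,h_m)\,dx,
$$
and hence, recalling the definition of $[b,T]_l$,
$$
\int_{\mathbb{R}^n}\Pi_l(g,h_1,\ldots,h_m)(x)\,b(x)\,dx=\int_{\mathbb{R}^n}g(x)\,[b,T]_l(h_1,\ldots,h_m)(x)\,dx.
$$

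To finish, I would apply H\"older's inequality together with the boundedness of the multilinear commutator with a ${\rm BMO}$ symbol --- the first (classical and factorization-independent) half of Theorem \ref{c:bmo} --- to bound the right-hand side by
$$
\|g\|_{L^{p'}(\mathbb{R}^n)}\,\big\|[b,T]_l(h_1,\ldots,h_m)\big\|_{L^{p}(\mathbb{R}^n)}\le C\,\|b\|_{{\rm BMO}(\mathbb{R}^n)}\,\|g\|_{L^{p'}(\mathbb{R}^n)}\prod_{j=1}^m\|h_j\|_{L^{p_j}(\mathbb{R}^n)}.
$$
Together with the $L^1$ membership and the $H^1$--$\mathrm{VMO}$ duality criterion from the first paragraph, this gives $\Pi_l(g,h_1,\ldots,h_m)\in H^1(\mathbb{R}^n)$ with the asserted estimate. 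I do not expect a deep obstacle: the real content is the partial-adjoint identity linking $\Pi_l$ tested against $b$ to $[b,T]_l$ tested against $g$. The two points needing care are (i) justifying the adjoint manipulation when $g,h_1,\ldots,h_m$ lie only in the respective $L^{p_i}$ spaces --- handled by approximating them in norm by Schwartz functions and passing to the limit using the $L^q$ bounds for $T$ and $T_l^*$ --- and (ii) invoking the precise form of the Coifman--Weiss $H^1$--$\mathrm{VMO}$ duality that lets one deduce $H^1$-membership of an $L^1$ function from control of its pairing against $C_c^\infty$ functions in the ${\rm BMO}$ norm.
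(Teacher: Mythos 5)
Your proof takes essentially the same route as the paper: both verify $L^1$ membership, establish the pairing identity $\int \Pi_l(g,h_1,\ldots,h_m)\,b\,dx = \int g\,[b,T]_l(h_1,\ldots,h_m)\,dx$, invoke the $L^{p_1}\times\cdots\times L^{p_m}\to L^p$ boundedness of the multilinear commutator with a BMO symbol (the paper cites Lerner--Ombrosi--P\'erez--Torres--Trujillo-Gonz\'alez, Theorem~3.18), and conclude by $H^1$--BMO duality. You supply somewhat more detail on why $T_l^*$ inherits Calder\'on--Zygmund bounds and on the density argument, whereas the paper additionally makes the cancellation $\int\Pi_l(g,h_1,\ldots,h_m)\,dx=0$ explicit before invoking duality; this is worth keeping in mind since the version of the duality criterion you state should really be accompanied by that mean-zero check (or a careful statement of the VMO-predual criterion that subsumes it), but this is a minor point and does not change the substance.
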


\begin{proof}
Note that for $p_1,\ldots,p_m\in (1, \infty)$, $p\in [1,\infty)$ with ${1\over p} ={1\over p_1} +\cdots+ {1\over p_m}$, and for any $g\in L^{p'}(\mathbb{R}^n)$ and $h_i\in L^{p_i}(\mathbb{R}^n)$, $i=1,\ldots,m$,
we have
$\Pi_{l}(g,h_1,\ldots,h_m)(x) \in L^1(\mathbb{R}^n) $ by H\"older duality.  Moreover, we have
$$\int_{\mathbb{R}^n} \Pi_{l}(g,h_1,\ldots,h_m)(x)\,dx=0.$$
Hence, for $b\in {\rm BMO}(\mathbb{R}^n)$, we have
\begin{eqnarray*}
\left|\int_{\mathbb{R}^n} b(x)\Pi_{l}(g,h_1,\ldots,h_m)(x) dx \right| & = & \left|\int_{\mathbb{R}^n} g(x)[b, T]_l(h_1,\ldots,h_m)(x)dx\right|\\
& \leq & C \|h_1\|_{L^{p_1}(\mathbb{R}^n)}\cdots \|h_m\|_{L^{p_m}(\mathbb{R}^n)}\|g\|_{L^{p'}(\mathbb{R}^n)}\|b\|_{{\rm BMO}(\mathbb{R}^n)}.
\end{eqnarray*}
Here in the last equality we use \cite[Theore 3.18]{LOPTT} which provides an estimate for the multilinear commutator in terms of BMO.  Therefore, $\Pi_{l}(g,h_1,\ldots,h_m) $ is in $H^1(\mathbb{R}^n)$, with
$$\|\Pi_{l}(g,h_1,\ldots,h_m)\|_{H^1(\mathbb{R}^n)}\leq C\|g\|_{L^{p'}(\mathbb{R}^n)}\|h_1\|_{L^{p_1}(\mathbb{R}^n)}\cdots \|h_m\|_{L^{p_m}(\mathbb{R}^n)}.$$
The proof of Proposition \ref{t-H1 estimate of pi} is completed.
\end{proof}

Next, we recall a technical lemma about certain $H^1(\mathbb{R}^n)$ functions.  
\begin{lemma}\label{lemma Hardy}
Suppose $f$ is a function defined on $\mathbb{R}^n$ satisfying:  $\int_{\mathbb{R}^n} f(x)\,dx=0$, and $|f(x)|\leq \chi_{B(x_0,1)}(x)+\chi_{B(y_0,1)}(x)$, where $|x_0-y_0|:=M>10$. Then we have
\begin{align}
 \|f\|_{H^1(\mathbb{R}^n)} \leq C_n\log M.
\end{align}
\end{lemma}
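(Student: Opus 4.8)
The plan is to estimate $\|f\|_{H^1(\mathbb{R}^n)}$ via a concrete atomic/molecular decomposition adapted to the two unit balls $B(x_0,1)$ and $B(y_0,1)$ sitting at distance $M$. First I would split $f = f_1 + f_2$ where $f_1 := f\,\chi_{B(x_0,1)}$ is supported in $B(x_0,1)$ and $f_2 := f\,\chi_{B(y_0,1)}$ is supported in $B(y_0,1)$; by hypothesis $|f_1|\le 1$ and $|f_2|\le 1$, so each has $L^1$ norm at most $|B(0,1)| =: c_n$, and since $\int f = 0$ we have $a := \int_{\mathbb{R}^n} f_1(x)\,dx = -\int_{\mathbb{R}^n} f_2(x)\,dx$ with $|a|\le c_n$. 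The pieces $f_1$ and $f_2$ individually have nonzero integral, so they are not themselves $H^1$-atoms; the standard remedy is to borrow a cancelling bump. Fix a smooth (or just $L^\infty$, compactly supported) function $\phi$ with $\int\phi = 1$, $\|\phi\|_\infty \le C_n$, supported in $B(x_0,1)$, and similarly $\psi$ supported in $B(y_0,1)$ with $\int\psi=1$. Then write
$$
f = \underbrace{\big(f_1 - a\phi\big)}_{=:g_1} \;+\; \underbrace{\big(f_2 + a\psi\big)}_{=:g_2} \;+\; a\,\big(\phi - \psi\big).
$$
Now $g_1$ is supported in $B(x_0,1)$, has mean zero, and $\|g_1\|_\infty \le 1 + |a|\,C_n \le C_n'$, so $g_1$ is (a constant multiple of) an $H^1$-atom: $\|g_1\|_{H^1}\le C_n$. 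Likewise $\|g_2\|_{H^1}\le C_n$. These two contributions are $O(1)$, so the whole $\log M$ growth must come from the third term $a(\phi-\psi)$.

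For the term $a(\phi-\psi)$, note $|a|\le c_n$, so it suffices to bound $\|\phi-\psi\|_{H^1}$. This is the heart of the matter: a mean-zero bump of height $\sim 1$ and width $\sim 1$ located near $x_0$ minus one located near $y_0$, with the two centers separated by $M$. The estimate $\|\phi-\psi\|_{H^1}\lesssim \log M$ is classical and can be proved by a "chaining" argument: pick points $x_0 = z_0, z_1, \dots, z_N = y_0$ along the segment from $x_0$ to $y_0$ with $|z_{i}-z_{i-1}| \approx 2^{\min(i, N-i)}$ — i.e., take dyadically growing steps out from $x_0$ and dyadically shrinking steps into $y_0$ — so that $N \approx \log_2 M$. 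Let $\rho_i$ be an $L^\infty$ bump with $\int\rho_i = 1$ supported in a ball centered near $z_i$ of radius comparable to the local step size $r_i := \max(|z_i - z_{i-1}|, |z_{i+1}-z_i|)$, with $\rho_0 = \phi$ and $\rho_N = \psi$. Telescoping,
$$
\phi - \psi = \sum_{i=1}^{N} \big(\rho_{i-1} - \rho_i\big),
$$
and each difference $\rho_{i-1}-\rho_i$ is a mean-zero function supported in a ball $B_i$ of radius $\approx r_i$ with $\|\rho_{i-1}-\rho_i\|_\infty \lesssim r_i^{-n}$ (choosing the bumps scale-consistently), hence $\|\rho_{i-1}-\rho_i\|_{H^1}\lesssim |B_i|\cdot r_i^{-n}\lesssim 1$ as it is a bounded multiple of an $H^1$-atom on $B_i$. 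Summing the $N\approx\log M$ terms by the triangle inequality in $H^1$ gives $\|\phi-\psi\|_{H^1}\lesssim \log M$, hence $\|f\|_{H^1}\lesssim C_n\log M$.

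The main obstacle — really the only nontrivial point — is making the chaining in the last step precise: one must choose the intermediate centers $z_i$ and the radii $r_i$ so that consecutive balls $B_i$ overlap enough that $\rho_{i-1}$ and $\rho_i$ can simultaneously be taken supported in $B_i$ (so the difference is supported in a single ball), while the total number of steps stays $O(\log M)$; the dyadic-out/dyadic-in choice of step sizes achieves exactly this, and the per-step $H^1$-atom bound is then scale-invariant and uniform. Everything else is the elementary atomic estimate $\|a_{B}\|_{H^1}\le C$ for an $L^\infty$-normalized mean-zero function supported in a ball $B$. An alternative to the chaining argument is to test directly against the maximal-function definition of $H^1$, or to invoke the known logarithmic bound on $\|\log|x|\|_{BMO}$-type dualities, but the constructive chaining is cleanest and fits the constructive spirit of the paper.
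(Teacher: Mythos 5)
Your proof is correct. The paper does not actually prove this lemma; it only remarks that the estimate follows from the maximal-function and atomic characterizations of $H^1(\mathbb{R}^n)$ and refers to \cite{DLWY} and \cite{LW} for details. Your constructive chaining decomposition is a valid realization of the atomic route: subtracting the normalized bumps $a\phi$ and $a\psi$ turns $f_1$, $f_2$ into fixed multiples of atoms, and the telescope $\phi-\psi = \sum_{i=1}^N(\rho_{i-1}-\rho_i)$ along a dyadic-out/dyadic-in chain with $N \approx \log_2 M$ produces differences each of which is a uniform dimensional constant times an $H^1$-atom, since consecutive radii are comparable within a bounded factor so the mismatch between $\|\rho_{i-1}-\rho_i\|_{L^\infty}$ and the inverse measure of the containing ball is $O_n(1)$; summing gives $\|\phi-\psi\|_{H^1}\lesssim_n \log M$ as claimed. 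For comparison, the route the paper points to (and presumably used in \cite{DLWY} and \cite{LW}) is to bound the nontangential maximal function $f^*$ directly: $f^*\lesssim 1$ on the two unit balls, $f^*(x)\lesssim \min(|x-x_0|,|x-y_0|)^{-n}$ in the intermediate shell $2\lesssim |x-x_0|,|x-y_0|\lesssim M$ (where no cancellation helps because $f_1$ and $f_2$ separately carry mass $\pm a$), and $f^*(x)\lesssim M|x-x_0|^{-n-1}$ in the far field where the global mean-zero gives the extra decay; integrating these three regions yields $O(1)+O(\log M)+O(1)$. A third route, dual to yours, pairs $f$ against $b\in\mathrm{BMO}(\mathbb{R}^n)$: one writes $\langle f,b\rangle = \int f_1(b-b_{B(x_0,1)}) + \int f_2(b-b_{B(y_0,1)}) + a\,(b_{B(x_0,1)}-b_{B(y_0,1)})$ and invokes the standard chain-of-balls estimate $|b_{B(x_0,1)}-b_{B(y_0,1)}|\lesssim(\log M)\|b\|_{\mathrm{BMO}}$, which is exactly the BMO-side shadow of your telescoping atoms. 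All three are essentially equivalent; your explicit atomic construction is the most constructive and fits well with the algorithmic theme of the paper.
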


We can obtain this lemma using the maximal function characterization of $H^1(\mathbb{R}^n)$, as well as the atomic decomposition characterization of $H^1(\mathbb{R}^n)$. For details of the proof, we refer to
similar versions of this lemma in \cite{DLWY} and \cite{LW}.

Suppose $1\leq l\leq m$.  Ideally, given an $H^1(\mathbb{R}^n)$-atom $a$, we would like to find functions $g\in L^{p'}(\mathbb{R}^n)$, $h_1\in L^{p_1}(\mathbb{R}^n),\ldots,$ $h_m\in L^{p_m}(\mathbb{R}^n)$ such that $\Pi_l(g,h_1,\ldots,h_m)=a$ pointwise.  While this can not be accomplished in general, the Theorem below shows that it is ``almost'' true.

\begin{theorem}
\label{thm:ApproxFactorization}
Suppose $1\leq l\leq m$. Suppose that $T$ is an $m$-linear Calder\'on--Zygmund operator, which is $mn$-homogeneous in the $l$th component. For every $H^1(\mathbb{R}^n)$-atom $a(x)$ and for all $\varepsilon>0$ and for all $1<p_1,\ldots,p_m<\infty$ and $1\leq p<\infty$, with
$$ {1\over p_1} + \cdots + {1\over p_m} = {1\over p},  $$
there exists $g\in L^{p'}(\mathbb{R}^n)$, $h_{1}\in L^{p_1}(\mathbb{R}^n)$,$\ldots$, $h_{m}\in L^{p_m}(\mathbb{R}^n)$ and a large positive number $M$ $($depending only on $\varepsilon$$)$ such that:
$$
\left\Vert a-\Pi_{l}(g,h_1,\ldots,h_m)\right\Vert_{H^1(\mathbb{R}^n)}<\varepsilon
$$
and that $\left\Vert g\right\Vert_{L^{p'}(\mathbb{R}^n)}\left\Vert h_1\right\Vert_{L^{p_1}(\mathbb{R}^n)}\cdots \left\Vert h_m\right\Vert_{L^{p_m}(\mathbb{R}^n)}\leq C M^{mn}$, where $C$ is an absolute positive constant. 
\end{theorem}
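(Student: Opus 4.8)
The plan is to adapt Uchiyama's constructive scheme to the $m$-linear operator $\Pi_{l}$. Fix an $H^1(\mathbb{R}^n)$-atom $a$ supported on a ball $B=B(x_0,r)$ with $\|a\|_{L^\infty(\mathbb{R}^n)}\le|B|^{-1}$ and $\int_{\mathbb{R}^n}a=0$, and fix $\varepsilon>0$. I will introduce a large parameter $M>10$, to be pinned down only at the end (depending on $\varepsilon$ and on the constants attached to $T$), and choose $m$ points $z_0$ and $z_i$ ($1\le i\le m$, $i\ne l$) so that $x_0,z_0$ and the $z_i$ are pairwise at distance $\approx Mr$; put $B_0:=B(z_0,r)$ and $B_i:=B(z_i,r)$, which are then pairwise disjoint. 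The candidate factors will be
\begin{align*}
h_l:=a,\qquad h_i:=|B_i|^{-1}\chi_{B_i}\ \ (i\ne l),\qquad g_0:=(Mr)^{mn}|B_0|^{-1}\chi_{B_0},
\end{align*}
the idea being that $h_l$ ``reproduces'' the atom while the other data are far-away normalized bumps; the amplification $(Mr)^{mn}$ on $g_0$ is chosen precisely to cancel the $M^{-mn}$ decay forced by the size estimate \eqref{size}.

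Next, set $\gamma:=T_{l}^*(h_1,\dots,h_{l-1},g_0,h_{l+1},\dots,h_m)(x_0)$ and $g:=g_0/\gamma$, and analyze $\Pi_{l}(g,h_1,\dots,h_m)$ on $B$ and off $B$. On $B$ the summand $g\,T(h_1,\dots,h_m)$ vanishes since $g$ is supported on the far ball $B_0$, so there $\Pi_{l}(g,h_1,\dots,h_m)(x)=a(x)\,T_{l}^*(h_1,\dots,h_{l-1},g,h_{l+1},\dots,h_m)(x)$; the hypothesis that $T$ is $mn$-homogeneous in the $l$th component (applied to the balls $B,B_0$ and the $B_i$, all at mutual distance $\approx Mr$) forces $|\gamma|\ge c>0$ with $c$ independent of $r$ and $M$, and the smoothness estimate \eqref{regularity} in the output variable gives $|T_{l}^*(h_1,\dots,g,\dots,h_m)(x)-1|\lesssim M^{-\epsilon}$ for all $x\in B$, so $a\cdot T_{l}^*(\cdots)=a+E_1$ with $|E_1|\lesssim M^{-\epsilon}|B|^{-1}\chi_B$. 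Off $B$ the factor $h_l=a$ annihilates $a(x)T_{l}^*(\cdots)$, so only $-g\,T(h_1,\dots,h_m)$ survives, supported on $B_0$, and there the vanishing moment $\int a=0$ together with \eqref{regularity} in the $l$th input variable yields $|g\,T(h_1,\dots,h_m)|\lesssim M^{-\epsilon}|B|^{-1}\chi_{B_0}$. Hence $\Pi_{l}(g,h_1,\dots,h_m)=a+E$ with $E:=E_1-g\,T(h_1,\dots,h_m)$ mean zero (because $\int_{\mathbb{R}^n}\Pi_{l}=0$ by the computation in the proof of Proposition \ref{t-H1 estimate of pi}, and $\int_{\mathbb{R}^n}a=0$) and $|E(x)|\lesssim M^{-\epsilon}|B|^{-1}\big(\chi_B(x)+\chi_{B_0}(x)\big)$. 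Rescaling $B$ and $B_0$ to unit balls by a dilation and translation, which preserves the $H^1(\mathbb{R}^n)$ norm and normalizes the heights to a dimensional constant times $M^{-\epsilon}$, and applying Lemma \ref{lemma Hardy} give $\|a-\Pi_{l}(g,h_1,\dots,h_m)\|_{H^1(\mathbb{R}^n)}=\|E\|_{H^1(\mathbb{R}^n)}\lesssim M^{-\epsilon}\log M$, which is $<\varepsilon$ once $M$ is chosen large; and since $|\gamma|\ge c$ keeps $\|g\|_{L^{p'}}=\|g_0\|_{L^{p'}}/|\gamma|$ of order $M^{mn}$ and the relation ${1\over p}=\sum_{i=1}^m {1\over p_i}$ makes every power of $r$ cancel, one gets $\|g\|_{L^{p'}(\mathbb{R}^n)}\prod_{i=1}^m\|h_i\|_{L^{p_i}(\mathbb{R}^n)}\lesssim M^{mn}$.

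The main obstacle is the quantitative lower bound $|\gamma|\ge c>0$: it is exactly the $mn$-homogeneity of $T$ in the $l$th component that guarantees the amplified far-away bump $g_0$ produces, through $T_{l}^*$, a factor of size comparable to $1$ on $B$, so that $a\cdot T_{l}^*(\cdots)\approx a$ there; without this non-degeneracy the whole construction collapses. The rest is careful bookkeeping: tracking the powers of $r$ so that the norm product is $\lesssim M^{mn}$ with no residual scale, and packaging the two error pieces---the one on $B$ controlled by the kernel smoothness \eqref{regularity}, the one on $B_0$ controlled by the vanishing moment of $a$---into a single mean-zero function of height $O(M^{-\epsilon}|B|^{-1})$ on two balls at distance $\approx Mr$, so that Lemma \ref{lemma Hardy} costs only the harmless factor $\log M$ and the total error tends to $0$ as $M\to\infty$ while the norm estimate is unaffected.
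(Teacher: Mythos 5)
Your proposal is correct and takes essentially the same route as the paper's proof: fix far-away bump functions at mutual distance $\approx Mr$, use the $mn$-homogeneity to get a quantitative lower bound on $T_l^*$ evaluated at $x_0$, divide so that $\Pi_l(g,h_1,\ldots,h_m)$ agrees with $a$ to leading order on $B$, and control the two error pieces (on $B$ via kernel regularity, on the far ball via the vanishing moment of $a$) before invoking Lemma \ref{lemma Hardy}. The only difference is a cosmetic re-normalization: you place the $(Mr)^{mn}$ amplification on $g_0$ and $L^1$-normalize the auxiliary bumps $h_i$, so that the reference quantity $\gamma$ is bounded below by an absolute constant, whereas the paper keeps raw indicators and divides $a$ by $T_l^*(\cdots)(x_0)\gtrsim M^{-mn}$; both yield the same norm product $\lesssim M^{mn}$ after the powers of $r$ cancel.
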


\begin{proof}
Let $a(x)$ be an $H^1(\mathbb{R}^n)$-atom, supported in $B(x_0,r)$, satisfying that
$$ \int_{\mathbb{R}^n}a(x)dx=0 \quad {\rm and} \quad \|a\|_{L^\infty(\mathbb{R}^n)}\leq r^{-n}.$$

Fix $1\leq l\leq m$ and fix $\varepsilon>0$.  Choose $M$  sufficiently large so that $$ \frac{\log M}{M^{\epsilon}} <\varepsilon,$$ where the constant $\epsilon$ appeared in the power of $M$ is from the regularity condition \eqref{regularity} of the multilinear Calder\'on--Zygmund kernel $K$. Now select $y_l\in\mathbb{R}^n$ so that $ y_{l,i}-x_{0,i}=\frac{Mr}{\sqrt{n}}$, where $x_{0,i}$ (reps. $y_{l,i}$) is the $i$th coordinate of $x_{0}$ (reps. $y_{l}$) for $i=1,2,\ldots,n$. Note that for this $y_l$, we have $|x_0-y_l|=Mr$.  Similar to the relation of $x_0$ and $y_l$, we choose $y_1$ such that $y_0$ and $y_1$ satisfies the same relationship as $x_0$ and $y_l$ do. Then by induction we choose $y_2, \ldots, y_{l-1}$, $y_{l+1},\ldots,y_m$.

We then set
\begin{eqnarray*}
 g(x) & := &\chi_{B(y_l,r)}(x),\\
 h_{j}(x) & := & \chi_{B(y_j,r)}(x),\quad j\neq l,\\
 h_{l}(x) & := & \frac{a(x)}{T_{l}^*(h_1,\ldots,h_{l-1},g,h_{l+1},\ldots,h_m)(x_0)}.\\
\end{eqnarray*}

Since $T$ is $mn$ homogeneous, and so is $T_l^*$, for the specific choice of the functions $h_1,\ldots,h_{l-1},g,h_{l+1},\ldots,h_m$ as above,  we have that there exists a positive constant $C$ such that
\begin{align}\label{claim degenerate}
\left|T_l^*(h_1,\ldots,h_{l-1},g,h_{l+1},\ldots,h_m)(x_0) \right|\geq C M^{-mn} \qquad {\rm\ for\ }\ 1\leq l\leq m.
\end{align}
%To see this, note that,
%\begin{align*}
%&T_{l}^*(h_1,\ldots,h_{l-1},g,h_{l+1},\ldots,h_m)(x_0)\\
% &= \int_{\mathbb{R}^{mn}} K(z_l,z_1,\ldots,z_{l-1},x_0,z_{l+1},\ldots,z_m) \prod_{j\neq l} h_{j}(z_j) g(z_l)\,dz_1\cdots dz_m.
%%&=\int_{\mathbb{R}^{mn}} { y_l-y_j \over |( (y_l-y_1)+ \cdots + (y_l-x_0)\cdots +(y_l-y_m) )|^{mn+1}}
%%\prod_{j\neq l} h_{j}(y_j) g(y_l)\,dy_1\cdots dy_m\\
%\end{align*}
%Since $|z_l-x_0| > |y_l-x_0| -|z_i-y_l| > Mr -r$ and $T$ is $mn$-homogeneous in the $l$th component,
%we get that
%\begin{align*}
%|T_{l}^*(h_1,\ldots,h_{l-1},g,h_{l+1},\ldots,h_m)(x_0)|\geq C {r^{nm}\over (Mr)^{mn}}= C{1\over M^{mn}}
%\end{align*}
%which implies that the claim \eqref{claim degenerate} holds.

From the definitions of the functions $g$ and $h_j$, we obtain that $\operatorname{supp}\,g=B(y_0,r)$ and $\operatorname{supp}\,h_i=B(x_0,r)$. Moreover,
$$\|g\|_{L^{p'}(\mathbb{R}^n)} \approx r^{\frac{n}{p'}}\quad\textnormal{ and }\quad \|h_i\|_{L^{p_i}(\mathbb{R}^n)} \approx  r^{\frac{n}{p_i}} $$
for $i=1,\ldots,l-1,l+1,\ldots,m$. Also we have
$$\|h_l\|_{L^{p_l}(\mathbb{R}^n)} =  \frac{1}{|T_{l}^*(h_1,\ldots,h_{l-1},g,h_{l+1},\ldots,h_m)(x_0)|} \|a\|_{L^{p_l}(\mathbb{R}^n)}\leq C M^{mn} r^{-n} r^{{n\over p_l}}, $$
where the last inequality follows from \eqref{claim degenerate}.
Hence we obtain that
\begin{align*}
 \|g\|_{L^{p'}(\mathbb{R}^n)} \|h_1\|_{L^{p_1}(\mathbb{R}^n)}\cdots \|h_m\|_{L^{p_m}(\mathbb{R}^n)} &\leq CM^{mn}r^{-n} r^{n({1\over p'}+{1\over p_1}+\cdots + {1\over p_m})}\\
&\leq  CM^{mn}.
\end{align*}
Next, we have
\begin{align*}
&a(x)-\Pi_{l}(g,h_{1},\ldots,h_{m})(x)\\
&=a(x)- \Big( h_{l} T_{l}^*(h_{1},\ldots, h_{l-1},g ,h_{l+1},\ldots,h_{l,m})(x) - gT(h_{1},\ldots,h_{m})(x) \Big)\\
&= a(x) \frac{T_{l}^*(h_1,\ldots,h_{l-1},g,h_{l+1},\ldots,h_m)(x_0)-T_{l}^*(h_1,\ldots,h_{l-1},g,h_{l+1},\ldots,h_m)(x)}
{T_{l}^*(h_1,\ldots,h_{l-1},g,h_{l+1},\ldots,h_m)(x_0)} \\
&\hskip1cm+g(x)T(h_{1},\ldots,h_{m})(x)\\
&=: W_1(x)+W_2(x).
\end{align*}

By definition, it is obvious that $W_1(x)$ is supported on $B(x_0,r)$ and $W_2(x)$ is supported on $B(y_0,r)$.  We first estimate $W_1$.  For $x\in B(x_0,r)$, we have
\begin{align*}
&|W_1(x)|\\
 &= |a(x)|\frac{|T_{l}^*(h_1,\ldots,h_{l-1},g,h_{l+1},\ldots,h_m)(x_0)-T_{l}^*(h_1,\ldots,h_{l-1},g,h_{l+1},
 \ldots,h_m)(x)|}
{|T_{l}^*(h_1,\ldots,h_{l-1},g,h_{l+1},\ldots,h_m)(x_0)|}\\
&\leq C  { \|a\|_{L^\infty(\mathbb{R}^n)} \over M^{-mn}} \int_{\prod_{j=1}^m B(y_j,r)} |K(z_l,z_1,\ldots,z_{l-1},x_0,z_{l+1},\ldots,z_m) \\
&\hskip5cm- K(z_l,z_1,\ldots,z_{l-1},x,z_{l+1},\ldots,z_m)  | \,dz_1\cdots dz_m \\
&\leq  C  M^{mn} r^{-n}  \int_{\prod_{j=1}^m B(y_j,r)} { |x_0-x|^\epsilon \over \big( \sum_{i=1,\ i\neq  l}^{m}| z_l-z_i| + | z_l-x_0|\big)^{mn+\epsilon}} \,dz_1\cdots dz_m\\
&\leq  C M^{mn}r^{-n} r^{mn} {r^\epsilon\over (Mr)^{mn+\epsilon}}\\
&\leq  C\frac{1}{ M^{\epsilon} r^n},
\end{align*}
where in the second inequality we use the regularity condition \eqref{regularity} of the multilinear kernel $K$.
Hence we obtain that $$ |W_1(x)|\leq  C\frac{1}{ M^{\epsilon} r^n} \chi_{B(x_0,r)}(x).$$

Next we estimate $W_2(x)$. From the definition of $g(x)$ and $h_l(x)$, we have
\begin{align*}
&|W_2(x)|\\
 &= \chi_{B(y_l,r)}(x) |T(h_{1},\ldots,h_{m})(x)|\\
 &=\chi_{B(y_l,r)}(x)  \frac{1}{|T_{l}^*(h_1,\ldots,h_{l-1},g,h_{l+1},\ldots,h_m)(x_0)|}\\
 &\hskip2cm\bigg| \int_{\prod_{j=1,j\neq l}^m B(y_j,r)\times B(x_0,r)} \big(K(y_1,\ldots,y_{l-1},x_0,y_{l+1},\ldots,y_m) \\
 &\hskip3cm -K(y_1,\ldots,y_{l-1},x,y_{l+1},
 \ldots,y_m) \big)a(y_l) \,dy_1\cdots dy_m \bigg|\\
&\leq C\chi_{B(y_l,r)}(x) M^{mn}  \int_{\prod_{j=1,j\neq l}^m B(y_j,r)\times B(x_0,r)}\|a\|_{L^\infty(\mathbb{R}^n)} { |x_0-x|^\epsilon \over \big( \sum_{s=1}^m |x_0-z_s| \big)^{mn+\epsilon}} \,dz_1\cdots dz_m\\
& \leq  C\chi_{B(y_l,r)}(x) M^{mn} r^{-n} \frac{r^\epsilon\cdot r^{mn}}{(Mr)^{mn+\epsilon} }\\
&= \frac{C}{M^{\epsilon} r^n },
\end{align*}
where in the second equality we use the cancelllation property of the atom $a(y_l)$
Hence we have $$ |W_2(x)|\leq  \frac{C}{M^{\epsilon} r^n } \chi_{B(y_l,r)}(x).$$

Combining the estimates of $W_1$ and $W_2$, we obtain that
\begin{align}\label{size}
 \Big|a(x)-\Pi_{l}(g,h_{1},\ldots,h_{m})(x)\Big)\Big|\leq  \frac{C}{M^{\epsilon} r^n}(\chi_{B(x_0,r)}(x)+\chi_{B(y_l,r)}(x)).
\end{align}
Next we point out that
\begin{align}\label{cancellation R}
\int_{\mathbb{R}^{n}} \Big[a(x)-\Pi_{l}(g,h_{1},\ldots,h_{m})(x)\Big) \Big] dx=0
\end{align}
since the atom $a(x)$ has cancellation and the second integral equals 0 just by the definitions of $\Pi_{l}$.

Then the size estimate \eqref{size} and the cancellation \eqref{cancellation R},  together with Lemma \ref{lemma Hardy}, imply that
$$ \Big\|a(x)-\Pi_{l}(g,h_{1},\ldots,h_{m})(x)\Big\|_{H^1(\mathbb{R}^n)} \leq C \frac{\log M}{M^{\epsilon}} <C\varepsilon. $$
This proves the result.
\end{proof}

With this approximation result, we can now prove the main Theorem \ref{weakfactorization}.

\begin{proof}[Proof of Theorem \ref{weakfactorization}]
By Proposition \ref{t-H1 estimate of pi}, we have that $$\|\Pi_{l}(g,h_1,\ldots,h_m)\|_{H^1(\mathbb{R}^n)}\leq C\|g\|_{L^{p'}(\mathbb{R}^n)}\|h_1\|_{L^{p_1}(\mathbb{R}^n)}\cdots \|h_m\|_{L^{p_m}(\mathbb{R}^n)}.$$
It is immediate that for any representation of $f$ as in \eqref{factorization}, i.e.,
\begin{align*} %\label{factorization}
f=\sum_{k=1}^\infty\sum_{s=1}^{\infty} \lambda_{s}^{k}\,\Pi_{l}(g_s^{k}, h_{s,1}^{k}, \ldots, h_{s,m}^{k}),
\end{align*}
We have that $\left\Vert f\right\Vert_{H^1(\mathbb{R}^n)}$ is bounded by
$$
 C\inf\left\{\sum_{k=1}^\infty\sum_{s=1}^{\infty} |\lambda_{s}^{k}| \|h_1\|_{L^{p_1}(\mathbb{R}^n)}\cdots \|h_m\|_{L^{p_m}(\mathbb{R}^n)}\|g\|_{L^{p'}(\mathbb{R}^n)}: f {\rm\ satisfies\ } \eqref{factorization}  \right\}.
$$

We turn to show that the other inequality holds and that it is possible to obtain such a decomposition for any $f\in H^1(\mathbb{R}^n)$.  Utilizing the atomic decomposition, for any $f\in H^1(\mathbb{R}^n)$ we can find a sequence $\{\lambda_{s}^{1}\}\in \ell^1$ and sequence of $H^1(\mathbb{R}^n)$-atoms $\{a_s^{1}\}$ so that $\displaystyle f=\sum_{s=1}^{\infty} \lambda_s^{1} a_{s}^{1}$ and $\displaystyle \sum_{s=1}^{\infty} \left\vert \lambda_s^{1}\right\vert \leq C \left\Vert f\right\Vert_{H^1(\mathbb{R}^n)}$.

We explicitly track the implied absolute constant $C$ appearing from the atomic decomposition since it will play a role in the convergence of the algorithm.  Fix $\varepsilon>0$ so that $\varepsilon C<1$.  We apply Theorem \ref{thm:ApproxFactorization} to each atom $a_{s}^{1}$.  So there exists $g_s^{1}\in L^{p'}(\mathbb{R}^n) $,  $h_{s,1}^{1}\in L^{p_1}(\mathbb{R}^n)$,\ldots, $h_{s,m}^{1}\in L^{p_m}(\mathbb{R}^n)$  with
\begin{equation*}
\left\Vert a_s^{1}-\Pi_{j,l}(g_s^{1}, h_{s,1}^{1},\ldots,h_{s,m}^{1})\right\Vert_{H^1(\mathbb{R}^n)}<\varepsilon,\quad \forall s
\end{equation*}
and
$\Vert g_s^{1}\Vert_{L^{p'}(\mathbb{R}^n)}\|h_1\|_{L^{p_1}(\mathbb{R}^n)}\cdots \|h_m\|_{L^{p_m}(\mathbb{R}^n)}\leq C(\varepsilon) $, where $C(\varepsilon)= CM^{nm}$ is a constant depending on $\varepsilon$ which we can track from Theorem \ref{thm:ApproxFactorization}.
Now note that we have
\begin{eqnarray*}
f & = & \sum_{s=1}^{\infty} \lambda_{s}^{1} a_s^{1}=   \sum_{s=1}^{\infty} \lambda_{s}^{1} \,\Pi_{l}(g_s^{1}, h_{s,1}^{1},\ldots,h_{s,m}^{1})+\sum_{s=1}^{\infty} \lambda_{s}^{1} \left(a_s^{1}-\Pi_{l}(g_s^{1}, h_{s,1}^{1},\ldots,h_{s,m}^{1})\right) \\
&=:&  M_1+E_1.
\end{eqnarray*}
Observe that we have
\begin{eqnarray*}
\left\Vert E_1\right\Vert_{H^1(\mathbb{R}^n)}  \leq  \sum_{s=1}^{\infty} \left\vert \lambda_s^{1}\right\vert \left\Vert a_s^{1}-\Pi_{l}(g_s^{1}, h_{s,1}^{1},\ldots,h_{s,m}^{1})\right\Vert_{H^1(\mathbb{R}^n)} \leq  \varepsilon \sum_{s=1}^{\infty} \left\vert \lambda_s^{1}\right\vert \leq \varepsilon C\left\Vert f\right\Vert_{H^1(\mathbb{R}^n)}.
\end{eqnarray*}
We now iterate the construction on the function $E_1$.  Since $E_1\in H^1(\mathbb{R}^n)$, we can apply the atomic decomposition in $H^1(\mathbb{R}^n)$ to find a sequence $\{\lambda_s^{2}\}\in \ell^1$ and a sequence of $H^1(\mathbb{R}^n)$-atoms $\{a_s^{2}\}$ so that $E_1=\sum_{s=1}^{\infty} \lambda_s^{2} a_{s}^{2}$ and
$$
\sum_{s=1}^{\infty} \left\vert \lambda_s^{2}\right\vert \leq C \left\Vert E_1\right\Vert_{H^1(\mathbb{R}^n)}\leq \varepsilon C^2 \left\Vert f\right\Vert_{H^1(\mathbb{R}^n)}.
$$
Again, we will apply Theorem \ref{thm:ApproxFactorization} to each atom $a_{s}^{2}$.  So there exists
$g_s^{2}\in L^{p'}(\mathbb{R}^n) $,  $h_{s,1}^{2}\in L^{p_1}(\mathbb{R}^n)$,\ldots, $h_{s,m}^{2}\in L^{p_m}(\mathbb{R}^n)$  with
\begin{equation*}
\left\Vert a_s^{2}-\Pi_{l}(g_s^{2}, h_{s,1}^{2},\ldots,h_{s,m}^{2})\right\Vert_{H^1(\mathbb{R}^n)}<\varepsilon,\quad \forall s.
\end{equation*}

We then have that:
\begin{eqnarray*}
E_1 & = & \sum_{s=1}^{\infty} \lambda_{s}^{2} a_s^{2}=   \sum_{s=1}^{\infty} \lambda_{s}^{2}\, \Pi_{l}(g_s^{2}, h_{s,1}^{2},\ldots,h_{s,m}^{2})+\sum_{s=1}^{\infty} \lambda_{s}^{2} \left(a_s^{2}-\Pi_{l}(g_s^{2}, h_{s,1}^{2},\ldots,h_{s,m}^{2})\right) \\
&:=&  M_2+E_2.
\end{eqnarray*}
But, as before, observe that
\begin{eqnarray*}
\left\Vert E_2\right\Vert_{H^1(\mathbb{R}^n)} & \leq & \sum_{s=1}^{\infty} \left\vert \lambda_s^{2}\right\vert \left\Vert a_s^{2}-\Pi_{l}(g_s^{2}, h_{s,1}^{2},\ldots,h_{s,m}^{2})\right\Vert_{H^1(\mathbb{R}^n)} \leq  \varepsilon \sum_{s=1}^{\infty} \left\vert \lambda_s^{2}\right\vert \\
&\leq& \left(\varepsilon C\right)^{2}\left\Vert f\right\Vert_{H^1(\mathbb{R}^n)}.
\end{eqnarray*}
And, this implies for $f$ that we have:
\begin{eqnarray*}
f & = & \sum_{s=1}^{\infty} \lambda_{s}^{1} a_s^{1} =   \sum_{s=1}^{\infty} \lambda_{s}^{1} \,\Pi_{l}(g_s^{1}, h_{s,1}^{1},\ldots,h_{s,m}^{1})+\sum_{s=1}^{\infty} \lambda_{s}^{1} \left(a_s^{1}-\Pi_{l}(g_s^{1}, h_{s,1}^{1},\ldots,h_{s,m}^{1})\right)\\
 & = & M_1+E_1=M_1+M_2+E_2 \\
 &= & \sum_{k=1}^{2} \sum_{s=1}^{\infty} \lambda_{s}^{k} \,\Pi_{l}(g_s^{k}, h_{s,1}^{k},\ldots,h_{s,m}^{k})+E_2.
\end{eqnarray*}

Repeating this construction for each $1\leq k\leq K$ produces functions $g_s^{k}\in L^{p'}(\mathbb{R}^n)$, $h_{s,1}^{k}\in L^{p_1}(\mathbb{R}^n),\ldots, h_{s,m}^{k}\in L^{p_m}(\mathbb{R}^n)$ with $\left\Vert g_s^{k}\right\Vert_{L^{p'}(\mathbb{R}^n)}\left\Vert h_{s,1}^{k}\right\Vert_{L^{p_1}(\mathbb{R}^n)}\cdots \left\Vert h_{s,m}^{k}\right\Vert_{L^{p_m}(\mathbb{R}^n)}\leq C(\varepsilon) $ for all $s$, sequences $\{\lambda_{s}^{k}\}\in \ell^1$ with $\left\Vert \{\lambda_{s}^{k}\}\right\Vert_{\ell^1}\leq \varepsilon^{k-1} C^k \left\Vert f\right\Vert_{H^1(\mathbb{R}^n)}$, and a function $E_K\in H^1(\mathbb{R}^n)$ with $\left\Vert E_K\right\Vert_{H^1(\mathbb{R}^n)}\leq \left(\varepsilon C\right)^{K}\left\Vert f\right\Vert_{H^1(\mathbb{R}^n)}$ so that
$$
f=\sum_{k=1}^{K}  \sum_{s=1}^{\infty} \lambda_{s}^{k} \,\Pi_{l}(g_s^{k}, h_{s,1}^{k},\ldots,h_{s,m}^{k})+E_K.
$$
Passing $K\to\infty$ gives the desired decomposition of
$$f=\sum_{k=1}^{\infty} \sum_{s=1}^\infty \lambda_{s}^{k} \,\Pi_{l}(g_s^{k}, h_{s,1}^{k},\ldots,h_{s,m}^{k}).
$$
We also have that:
$$
\sum_{k=1}^{\infty} \sum_{s=1}^\infty\left\vert \lambda_{s}^{k}\right\vert \leq \sum_{k=1}^{\infty} \varepsilon^{-1} (\varepsilon C)^{k} \left\Vert f\right\Vert_{H^1(\mathbb{R}^n)}= \frac{ C}{1-\varepsilon C}\left\Vert f\right\Vert_{H^1(\mathbb{R}^n)}.
$$
\end{proof}

Finally, we dispense with the proof of Theorem \ref{c:bmo}.

\begin{proof}[Proof of Theorem \ref{c:bmo}]
The upper bound in this theorem is contained in \cite[Theorem 3.18]{LOPTT}.  For the lower bound, suppose that $f\in H^1(\mathbb{R}^n)\cap L_c^\infty(\mathbb{R}^n)$, where $L_c^\infty(\mathbb{R}^n)$ is the subspace of $L^\infty(\mathbb{R}^n)$ consisting of functions with compact support in $\mathbb{R}^n$.
Then using the weak factorization in Theorem \ref{weakfactorization} we have that for every $b\in \cup_{q>1} L^q_{loc}(\mathbb{R}^n)$,
\begin{eqnarray*}
\left\langle b,f\right\rangle_{L^2(\mathbb{R}^n)} & = & \sum_{k=1}^{\infty} \sum_{s=1}^\infty \lambda_{s}^{k} \langle b,\Pi_{l}(g_s^{k}, h_{s,1}^{k},\ldots,h_{s,m}^{k})\rangle_{L^2(\mathbb{R}^n)}\\
 &=&  \sum_{k=1}^{\infty} \sum_{s=1}^\infty \lambda_{s}^{k} \langle g_s^{k} , [b,T]_l(h_{s,1}^{k},\ldots,h_{s,m}^{k})\rangle_{L^2(\mathbb{R}^n)}.
\end{eqnarray*}
Hence, we have that
\begin{eqnarray*}
\left\vert \left\langle b,f\right\rangle_{L^2(\mathbb{R}^n)}\right\vert &  \leq  & \sum_{k=1}^{\infty}\sum_{s=1}^\infty \left\vert \lambda_s^k\right\vert \left\Vert [b,T]_l(h_{s,1}^{k},\ldots,h_{s,m}^{k})\right\Vert_{L^p(\mathbb{R}^n)}\left\Vert g_s^k\right\Vert_{L^{p'}(\mathbb{R}^n)}\\
& \leq & \|[b,T]_l : L^{p_1}(\mathbb{R}^n)\times \cdots\times L^{p_m}(\mathbb{R}^n)\to L^{p}(\mathbb{R}^n)\| \\
&&\hskip1cm\times\sum_{k=1}^{\infty}\sum_{s=1}^\infty \left\vert \lambda_s^k\right\vert \left\Vert g_s^{k}\right\Vert_{L^{p'}(\mathbb{R}^n)} \prod_{j=1}^m \left\Vert h_{s,j}^{k}\right\Vert_{L^{p_j}(\mathbb{R}^n)}\\
& \leq  & C \|[b,T]_l : L^{p_1}(\mathbb{R}^n)\times \cdots\times L^{p_m}(\mathbb{R}^n)\to L^{p}(\mathbb{R}^n)\| \left\Vert f\right\Vert_{H^1(\mathbb{R}^n)}.
\end{eqnarray*}
By the duality between $ {\rm BMO}(\mathbb{R}^n)$ and $H^1(\mathbb{R}^n)$ we have that:
$$
\left\Vert b\right\Vert_{ {\rm BMO}(\mathbb{R}^n)}\approx \sup_{\left\Vert f\right\Vert_{H^1(\mathbb{R}^n)}\leq 1} \left\vert \left\langle b,f\right\rangle_{L^2(\mathbb{R}^n)}\right\vert\leq C\|[b,T]_l : L^{p_1}(\mathbb{R}^n)\times \cdots\times L^{p_m}(\mathbb{R}^n)\to L^{p}(\mathbb{R}^n)\|.
$$

\end{proof}

\bigskip

{\bf Acknowledgments:}    %The authors would like to thank ...
J. Li's research supported by ARC DP 160100153 and Macquarie University New Staff Grant.
B. D. Wick's research supported in part by National Science Foundation DMS grants \#1603246 and \# 1560955.

Ji Li, Department of Mathematics, Macquarie University, NSW, 2109, Australia.

\smallskip

{\it E-mail}: \texttt{ji.li@mq.edu.au}

\vspace{0.3cm}

%Brett D. Wick

%\smallskip

Brett D. Wick, Department of Mathematics, Washington University -- St. Louis, St. Louis, MO 63130-4899 USA

\smallskip

{\it E-mail}: \texttt{wick@math.wustl.edu}
\end{document}